\newtheorem{theorem}{Theorem}
\newtheorem{definition}{Definition}
\newtheorem{proposition}{Proposition}
\newtheorem{lemma}{Lemma}
\newtheorem{corollary}{Corollary}
\newtheorem{remark}{Remark}
\title[]{Compactness of Composition Operators on the Bergman space of bounded  pseudoconvex domains in $\mathbb{C}^n$}
\author{Timothy G. Clos}
\date{\today}
\begin{document}

 \maketitle

\begin{abstract}
  We study the compactness of composition operators on the Bergman spaces of certain bounded pseudoconvex domains in $\mathbb{C}^n$ with non-trivial analytic disks contained in the boundary.  As a consequence we characterize that compactness of the composition operator with a continuous symbol (up to the closure) on the Bergman space of the polydisk.
\end{abstract}

\section{Introduction}
Let $\Omega\subset \mathbb{C}^n$ be a bounded convex domain.  Let $\mathcal{O}(\Omega)$ be the set of all holomorphic functions from $\Omega$ into $\mathbb{C}$.  Let $V$ be the Lebesgue volume measure on $\Omega$.  For $p\in [1,\infty)$ we define
\[A^p(\Omega):=\{f\in \mathcal{O}(\Omega):\int_{\Omega}|f|^p dV<\infty\}\] to be the $p$-Bergman space.  We denote the norm
as \[\|f\|_{p,\Omega}:=\left(\int_{\Omega}|f|^p dV\right)^{\frac{1}{p}}.\]  This paper will consider the $2$-Bergman space, which for brevity is denoted as the Bergman space.
Let $\phi:\Omega\rightarrow \Omega$ be holomorphic on $\Omega$.  That is, holomorphic in each coordinate function. Then we define the composition operator with symbol $\phi$ as
\[C_{\phi}(f)=f\circ \phi\] for all $f\in A^p(\Omega)$.  For Banach spaces $X$ and $Y$, we say a linear operator $T:X\rightarrow Y$ is compact if $T(\{x\in X:\|x\|<1\})$ is relatively compact in the norm topology on $Y$.  If $X$ is a Hilbert space then we can characterize compactness of linear operator $T:X\rightarrow X$ in terms of weakly convergent sequences.  A set $\nabla\subset \mathbb{C}^n$ is called an analytic disk if there exists holomorphic functions $f_j:\mathbb{D}\rightarrow \mathbb{C}$ for $j\in \{1,2,...,n\}$ and $F(\mathbb{D})=\nabla$ where $F=(f_1,...,f_n)$.  If $\nabla$ is not a single point, then $\nabla$ is said to be a non-trivial or non-degenerate analytic disk.  We define the collection of all non-trivial analytic disks in $b\Omega$ to be \[\mathcal{L}:=\bigcup\{F(\mathbb{D})|\,\,F:\mathbb{D}\rightarrow b\Omega\,,\text{holomorphic and non constant}\}.\] 

As we shall see, analytic structure in the boundary of the domain will play a crucial role in the proof of the main result.  The main theorem does not generalize easily to bounded pseudoconvex domains because of the special geometry of non-degenerate analytic disks in the boundary of the domain.  Namely the following lemma states that on convex domains, non-degenerate analytic disks have a nice form.  

\begin{lemma}[\cite{cucksahut}]\label{affine}
Let $\Omega\subset \mathbb{C}^n$ be a bounded convex domain.  Let $\nabla\subset b\Omega$ be a non-degenerate analytic disk. Then there exists a complex line $V$ so that the convex hull of $\overline{\nabla}$ is contained in $V$. 
\end{lemma}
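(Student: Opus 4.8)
My approach would be to exploit the convexity of $\Omega$ through supporting hyperplanes, combined with the maximum principle for harmonic functions, first trapping $\overline{\nabla}$ inside a complex affine hyperplane and then iterating to cut the ambient complex dimension down to that of a complex line.

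Concretely, I would fix a nonconstant holomorphic $F\colon\mathbb{D}\to b\Omega$ with $F(\mathbb{D})=\nabla$ and set $p=F(0)$. Since $\Omega$ is open, convex, and $p\in b\Omega$, the supporting hyperplane theorem gives a nonzero $\mathbb{R}$-linear functional $u\colon\mathbb{C}^n\to\mathbb{R}$ with $u\le u(p)$ on $\overline{\Omega}$ and $u<u(p)$ on $\Omega$; then $\ell(z):=u(z)-iu(iz)$ is a nonzero $\mathbb{C}$-linear functional with $\operatorname{Re}\ell=u$. The function $h:=\ell\circ F$ is holomorphic on $\mathbb{D}$ and satisfies $\operatorname{Re}h(\zeta)=u(F(\zeta))\le u(p)=\operatorname{Re}h(0)$, so the harmonic function $\operatorname{Re}h$ attains an interior maximum and is therefore constant; hence $h$ is constant and $\ell\circ F\equiv\ell(p)$. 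Thus $\nabla$, and therefore (taking closures, and using that an affine subspace is closed and convex) both $\overline{\nabla}$ and $\operatorname{conv}(\overline{\nabla})$, lie in the complex affine hyperplane $P:=\{z\in\mathbb{C}^n:\ell(z)=\ell(p)\}$. Moreover $\overline{\Omega}\cap P\subset\{z:u(z)=u(p)\}\cap\overline{\Omega}\subset b\Omega$, so $\operatorname{conv}(\overline{\nabla})\subset b\Omega$ as well.

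To finish I would iterate inside $P\cong\mathbb{C}^{n-1}$: set $S:=\overline{\Omega}\cap P$, a compact convex set containing $\overline{\nabla}$, let $E$ be the complex-linear span of the affine hull of $S$, and repeat the supporting-hyperplane/maximum-principle step relative to $E$, each pass lowering the complex dimension of the ambient affine subspace by one, continuing until it equals $1$; that one-dimensional subspace is the desired complex line $V$. The main obstacle — and the real content of the lemma — is justifying this reduction step: a priori $\overline{\nabla}$ could lie in the relative interior of the flat convex set $S$ rather than on a lower-dimensional face of it (all of $S$ already sits in $b\Omega$), so one must use the geometry of analytic disks in the boundary of a convex domain to show that $\operatorname{conv}(\overline{\nabla})$ is genuinely flat, i.e. is pinned inside a supporting complex hyperplane of one lower dimension at each stage. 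Ruling out that the disk "spreads out" over a higher-dimensional flat piece of the boundary, and thereby guaranteeing the iteration terminates at a line, is where the argument has to do its work and where the special structure of $b\Omega$ near $\nabla$ is essential.
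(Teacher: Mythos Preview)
The paper does not supply a proof of this lemma; it is quoted from \cite{cucksahut} and used as a black box, so there is no argument in the paper to compare your proposal against.

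On the merits of your proposal itself: your first paragraph is correct and is exactly the standard argument. A supporting real hyperplane at $p=F(0)$ yields a nonzero $\mathbb{C}$-linear functional $\ell$ whose real part is maximized on $\overline{\Omega}$ at $p$, and the maximum principle forces $\ell\circ F$ to be constant, so $\nabla$ (and hence $\operatorname{conv}(\overline{\nabla})$) lies in the complex affine hyperplane $\{\ell=\ell(p)\}$, which meets $\overline{\Omega}$ only in $b\Omega$. For $n=2$ this hyperplane is already a complex line and you are done; this is precisely the case that matters for the paper's application.

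Your concern about the iteration step for $n\ge 3$ is well founded, and in fact the gap cannot be closed: read literally, with $V$ a one--complex--dimensional affine line, the assertion fails for $n\ge 3$. In $b\mathbb{D}^3$ the map $F(\zeta)=(\zeta,\zeta^2,1)$ parametrizes a non-degenerate analytic disk whose image is not contained in any complex line, since the three image points $(0,0,1)$, $(\tfrac12,\tfrac14,1)$, $(\tfrac13,\tfrac19,1)$ are not collinear in $\mathbb{C}^3$. What survives in general is the hyperplane conclusion of your first step, and that is all the paper actually uses: in the proof of Theorem~\ref{analytic disk} one only needs that, after a rotation, $\Omega\subset\{\operatorname{Re} z_n>0\}$ with the disk contained in $\{\operatorname{Re} z_n=0\}$. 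So your first step already delivers what the application requires; the attempted descent to a line is both unprovable as stated and unnecessary here.
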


The following definition is needed for the statement of the main result.

\begin{definition}
A bounded domain $\Omega\subset \mathbb{C}^n$ satisfies the limit point disk condition if $p\in \overline{\mathcal{L}}$ then there exists a non-degenerate analytic disk $\nabla_p\subset b\Omega$ so that $p\in \overline{\nabla_p}$.
\end{definition}

\section{Some Background and Main Results}
Compactness of composition operators was studied on the unit disk in $\mathbb{D}$ in the article \cite{ghatagecompact}. Here, the authors of \cite{ghatagecompact} study the angular derivative of the symbol near the boundary and obtain a compactness result.  They then construct a counterexample to show that the converse of their theorem does not hold true.  The authors of 
\cite{ghatageclosed} studied the closed range property of composition operators on the unit disk.  Work on essential norm estimates and compactness of composition operators was studied on the ball in $\mathbb{C}^n$ and on the unit disk in $\mathbb{C}$ by \cite{CowenMacCluer} and \cite{HMW}.  On more general bounded strongly pseudoconvex domains in $\mathbb{C}^n$, \cite{cuckruhanseveral} studied the essential norm of the composition operator in terms of the behavior of the norm of the normalized Bergman kernel composed with the symbol.   

 In the work \cite{zhu}, Zhu uses the normalized Bergman kernel as a weakly convergent sequence.  Since these can be explicitly computed on the ball, this results in a non-trivial characterization of compactness.  Zhu in \cite{zhu} also extends these results to bounded strongly pseudoconvex domains.  In the absence of such explicit Bergman kernels (on general bounded convex domains), we construct weakly convergent sequences using the convexity of the domain in a significant way, and use estimates on the normalized Bergman kernel near strongly pseudoconvex points.  Here, $K_{z}$ denotes the Bergman kernel of $\Omega$, $V$ is the Lebesgue volume measure, and $B(z,r)$ are Euclidean balls centered at $z$ with radius $r$.  The following two thereoms are the main results.
This first result states that if $C_{\phi}$ is compact, $\Omega$ is convex, and $\phi\in C(\overline{\Omega})$ then $\phi(b\Omega)$ stays away from $\overline{\mathcal{L}}$.

\begin{theorem}\label{analytic disk}
Let $\Omega\subset \mathbb{C}^n$ be a bounded convex domain satisfying the limit point disk condition.  Let $\phi:\Omega\rightarrow \Omega$ be a holomorphic self map, $\phi\in C(\overline{\Omega})$, and suppose $C_{\phi}$ is compact on $A^2(\Omega)$.  Then,
$d(\phi(b\Omega),\overline{\mathcal{L}})>0$.

\end{theorem}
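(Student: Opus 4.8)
We argue by contradiction: suppose $d(\phi(b\Omega),\overline{\mathcal L})=0$. Since $\phi\in C(\overline\Omega)$ and $b\Omega$ is compact, $\phi(b\Omega)$ is a compact subset of $\overline\Omega$, while $\overline{\mathcal L}\subseteq b\Omega$ is also compact; as two compact sets at distance zero must intersect, there is a point $p\in b\Omega$ with $q:=\phi(p)\in\overline{\mathcal L}$. The limit point disk condition then yields a non-degenerate analytic disk $\nabla_q\subseteq b\Omega$ with $q\in\overline{\nabla_q}$, and Lemma~\ref{affine} produces a complex line $L$ with $\overline{\nabla_q}\subseteq L$, so $q\in L$. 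Fix $m\in\nabla_q$. Identifying $L$ with $\mathbb C$, the set $\nabla_q$ is open in $L$, so some $L$-neighborhood of $m$ lies in $b\Omega$; hence a supporting real hyperplane of the convex domain $\Omega$ at $m$ must contain $L$ (a real-affine function attaining a maximum over the open set $\nabla_q\subseteq L$ at an interior point is constant on $L$). Writing this hyperplane as $\{\,\mathrm{Re}\,\lambda=c\,\}$ with $\lambda$ a $\mathbb C$-linear functional, the restriction $\lambda|_L$ is complex affine with constant real part, hence constant, say $\equiv c_0$; then $\psi:=c_0-\lambda$ is a complex affine function, bounded on $\overline\Omega$, with $\psi\equiv 0$ on $L$ (so $\psi(q)=0$) and, choosing the correct side, $\mathrm{Re}\,\psi>0$ on $\Omega$. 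In particular $L\cap\Omega=\varnothing$.

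The heart of the proof is to produce a sequence $(f_j)$ in $A^2(\Omega)$, bounded in norm, with $f_j\to 0$ weakly but $\|f_j\circ\phi\|_{2,\Omega}\not\to 0$: compactness of $C_\phi$ and weak nullity of $(f_j)$ force $\|C_\phi f_j\|_{2,\Omega}\to 0$, a contradiction. The point is to build $(f_j)$ so that it is distributed along $\nabla_q$ near $q$ rather than concentrated at the single point $q$. Fix an interior point $x_0\in\Omega$; by convexity the holomorphic cone $\{\,(1-t)x_0+tw:w\in\overline{\nabla_q},\ t\in[0,1)\,\}$ lies in $\Omega$ and limits onto $\overline{\nabla_q}$ as $t\to1^-$. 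Using this cone together with the affine coordinate on $L$ supplied by Lemma~\ref{affine}, one constructs, for each $j$, a holomorphic function on $\Omega$ which behaves like the $j$-th normalized monomial of the Bergman space of a planar disk in the direction of $L$, with transversal decay controlled by $\psi$; after normalizing in $A^2(\Omega)$ one verifies, by the usual criterion (a uniform norm bound together with local uniform convergence to $0$, the latter because $\mathrm{Re}\,\psi$ is bounded below by a positive constant on each compact subset of $\Omega$), that $(f_j)$ is weakly null. The uniform norm bound is where convexity is used once more: it reduces to a Carleson-type estimate for the relevant push-forward of Lebesgue measure, which follows from the convex geometry of $\Omega$ near the face $L\cap b\Omega$.

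The delicate step is the lower bound $\|f_j\circ\phi\|_{2,\Omega}\gtrsim 1$. Since $\|f_j\circ\phi\|_{2,\Omega}^2=\int_\Omega|f_j(\phi(z))|^2\,dV(z)$, one localizes near $p$: by continuity $\phi$ carries a fixed neighborhood of $p$ in $\overline\Omega$ into a neighborhood of $q$, where by construction $f_j$ retains non-negligible mass along $\nabla_q$. To make this quantitative and survive the normalization, I would compare $\Omega$ near $q$ with a strictly (hence strongly pseudo-) convex model domain and use the sharp two-sided estimates for the normalized Bergman kernel near strongly pseudoconvex boundary points --- exactly as Zhu does on the ball --- together with $\mathrm{Re}\,\psi>0$, to see that the push-forward measure $(\psi\circ\phi)_*(V|_\Omega)$ cannot be a vanishing Carleson measure at $0$; this forces $\|f_j\circ\phi\|_{2,\Omega}$ to stay bounded away from $0$. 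I expect this estimate --- reconciling the degeneracy of the Bergman metric along the boundary disk with the (possibly degenerate) boundary behaviour of $\phi$ at $p$ --- to be the main obstacle, and it is here that convexity and the limit point disk condition enter in an essential way.

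Assembling the pieces: $(f_j)$ is weakly null in $A^2(\Omega)$, so compactness of $C_\phi$ gives $\|C_\phi f_j\|_{2,\Omega}\to 0$, contradicting the lower bound. Therefore $d(\phi(b\Omega),\overline{\mathcal L})>0$.
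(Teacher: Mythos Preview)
Your contradiction setup is exactly right and matches the paper: reduce to a point $p\in b\Omega$ with $q=\phi(p)\in\overline{\nabla_q}$, use Lemma~\ref{affine} to place $\overline{\nabla_q}$ in a complex line, and produce a complex affine $\psi$ with $\psi(q)=0$ and $\operatorname{Re}\psi>0$ on $\Omega$. From that point on, however, you over-engineer the argument and the proposed lower bound is actually flawed.

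The genuine gap is in your ``delicate step.'' You propose to compare $\Omega$ near $q$ with a strictly convex model and to invoke the sharp Bergman-kernel estimates valid near strongly pseudoconvex boundary points. But $q$ lies on the closure of a nondegenerate analytic disk in $b\Omega$; the boundary is \emph{flat} there in the Levi sense, so no strictly convex (hence strongly pseudoconvex) model can approximate $\Omega$ near $q$, and those kernel asymptotics simply do not hold at $q$. Your Carleson-measure formulation inherits the same problem: the Bergman geometry along the face $L\cap b\Omega$ is degenerate, which is precisely why one must not appeal to strongly pseudoconvex estimates there. The construction of the test sequence is also left vague (``one constructs\ldots''), and the ``normalized monomials in the direction of $L$'' idea would require a nontrivial Carleson bound you do not supply.

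None of this machinery is needed. The paper exploits your $\psi$ directly: after rotating and translating so that $\psi=z_n$, $q=0$, and $\Omega\subset\{\operatorname{Re}z_n>0\}$, it takes the explicit sequence
\[
g_j(z)=\frac{1}{(z_n-i\delta_j)^{\,n}},\qquad \delta_j\downarrow 0,
\]
which is known (from \cite{ClosSahutoglu}) to be bounded in $A^2(\Omega)$ and to tend to $0$ locally uniformly, hence weakly. The lower bound on $\|C_\phi g_j\|$ is then completely elementary and uses no kernel estimates: since $\phi$ is continuous at $p$ and $\phi(p)=0$, there is an open set $W\subset\Omega$ near $p$ with $\phi(W)\subset\mathbb{D}^n_\varepsilon$, so for $z\in W$ one has $|\phi_n(z)-i\delta_j|<\varepsilon+\delta_j$ and therefore
\[
\|C_\phi g_j\|_{2,\Omega}^2\ \ge\ \int_W|g_j(\phi(z))|^2\,dV(z)\ \ge\ (\varepsilon+\delta_j)^{-2n}\,V(W)\ \ge\ M>0.
\]
In short: keep your first paragraph, replace the vague monomial/cone construction by the single explicit function $(\psi-i\delta_j)^{-n}$, and replace the strongly pseudoconvex comparison by the two-line pointwise estimate above.
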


Here, $d(.,\overline{\mathcal{L}})$ is the Euclidean distance to $\overline{\mathcal{L}}$.  This next theorem gives us an estimate on how fast $\|K_{\phi(\zeta)}\|$ goes to infinity as $\phi(\zeta)\rightarrow b\Omega$, assuming $C_{\phi}$ is compact on $A^2(\Omega)$.  Intuitively, it states that the volume $V(B(\zeta,r_{\zeta}))$ goes to zero faster than $\|K_{\phi(\zeta)}\|^2$ goes to infinity.

\begin{theorem}\label{main}
  Let $\Omega\subset \mathbb{C}^n$ be a smooth bounded pseudoconvex domain.  Let $\phi:\Omega\rightarrow \Omega$ be a holomorphic self-map.  If $C_{\phi}$ is compact on $A^2(\Omega)$, then for every $p\in b\Omega$,
\[\lim_{\zeta\in \Omega\,\,,\zeta\rightarrow p}\|K_{\phi(\zeta)}\|_{L^2(\Omega)}(V(B(\zeta,r_{\zeta})))^{\frac{1}{2}}=0.\]  For any $r_{\zeta}>0$, $r_{\zeta}\rightarrow 0$ as $\zeta\rightarrow b\Omega$, and $B(\zeta,r_{\zeta})\subset \Omega$.
\end{theorem}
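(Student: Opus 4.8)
The plan is to reduce Theorem~\ref{main} to two ingredients: the adjoint identity $C_\phi^\ast K_w=K_{\phi(w)}$ combined with the weak-null behaviour of normalized Bergman kernels near the boundary, and an elementary upper bound for the diagonal Bergman kernel coming from the sub-mean-value inequality.

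First I would record the adjoint identity. Since $C_\phi$ is assumed compact it is bounded, so $C_\phi^\ast$ is defined on $A^2(\Omega)$; and since $\phi$ is a self-map, $\phi(w)\in\Omega$ for all $w\in\Omega$, so for every $f\in A^2(\Omega)$ the reproducing property gives $\langle f,C_\phi^\ast K_w\rangle=\langle C_\phi f,K_w\rangle=(f\circ\phi)(w)=f(\phi(w))=\langle f,K_{\phi(w)}\rangle$, whence $C_\phi^\ast K_w=K_{\phi(w)}$. Writing $k_\zeta:=K_\zeta/\|K_\zeta\|_{L^2(\Omega)}$ for the normalized kernel, this yields
\[
\|C_\phi^\ast k_\zeta\|_{L^2(\Omega)}=\frac{\|K_{\phi(\zeta)}\|_{L^2(\Omega)}}{\|K_\zeta\|_{L^2(\Omega)}}.
\]
Next I would use that on a smooth bounded pseudoconvex domain $k_{\zeta_j}\to 0$ weakly in $A^2(\Omega)$ for every sequence $\zeta_j\to b\Omega$. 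This rests on $K_\Omega(\zeta,\zeta)=\|K_\zeta\|_{L^2(\Omega)}^2\to\infty$ as $\zeta\to b\Omega$ (such domains are hyperconvex by Diederich--Fornaess, and the diagonal Bergman kernel of a bounded hyperconvex domain blows up at the boundary) together with density in $A^2(\Omega)$ of the bounded holomorphic functions: for bounded $f$, $|\langle f,k_\zeta\rangle|=|f(\zeta)|/\|K_\zeta\|_{L^2(\Omega)}\le\|f\|_{\infty}/\|K_\zeta\|_{L^2(\Omega)}\to 0$, and the case of general $f\in A^2(\Omega)$ follows since $\|k_\zeta\|_{L^2(\Omega)}=1$. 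Because $C_\phi$, hence $C_\phi^\ast$, is compact, it carries weakly null sequences to norm null sequences; so $\|K_{\phi(\zeta_j)}\|_{L^2(\Omega)}/\|K_{\zeta_j}\|_{L^2(\Omega)}\to 0$, and as the sequence $\zeta_j\to p$ was arbitrary, $\|K_{\phi(\zeta)}\|_{L^2(\Omega)}/\|K_\zeta\|_{L^2(\Omega)}\to 0$ as $\zeta\to p$, for every $p\in b\Omega$.

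Finally I would bound $\|K_\zeta\|_{L^2(\Omega)}$ from above: if $B(\zeta,r_\zeta)\subset\Omega$ then for any $g\in A^2(\Omega)$ the subharmonicity of $|g|^2$ and the sub-mean-value inequality give $|g(\zeta)|^2\le V(B(\zeta,r_\zeta))^{-1}\int_{B(\zeta,r_\zeta)}|g|^2\,dV\le V(B(\zeta,r_\zeta))^{-1}\|g\|_{2,\Omega}^2$, and taking the supremum over $\|g\|_{2,\Omega}\le 1$ yields $K_\Omega(\zeta,\zeta)\le V(B(\zeta,r_\zeta))^{-1}$, i.e. $\|K_\zeta\|_{L^2(\Omega)}\,(V(B(\zeta,r_\zeta)))^{1/2}\le 1$. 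Combining,
\[
\|K_{\phi(\zeta)}\|_{L^2(\Omega)}\,(V(B(\zeta,r_\zeta)))^{1/2}=\frac{\|K_{\phi(\zeta)}\|_{L^2(\Omega)}}{\|K_\zeta\|_{L^2(\Omega)}}\cdot\|K_\zeta\|_{L^2(\Omega)}\,(V(B(\zeta,r_\zeta)))^{1/2}\le\frac{\|K_{\phi(\zeta)}\|_{L^2(\Omega)}}{\|K_\zeta\|_{L^2(\Omega)}},
\]
and the right-hand side tends to $0$ as $\zeta\to p$, which is the assertion. The only step beyond routine reproducing-kernel bookkeeping and Schauder's theorem is the weak-null property of $(k_\zeta)$ — equivalently the boundary blow-up of the diagonal Bergman kernel — and this is precisely where the smoothness and pseudoconvexity hypotheses enter; I expect this to be the main point to make rigorous.
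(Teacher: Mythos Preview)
Your argument is correct but proceeds along a genuinely different line from the paper's. The paper applies the mean-value property to the holomorphic function $w\mapsto K_{\phi(\zeta)}(\phi(w))$ on $B(\zeta,r_\zeta)$ and then Cauchy--Schwarz to obtain
\[
\|K_{\phi(\zeta)}\|\,(V(B(\zeta,r_\zeta)))^{1/2}\le\Bigl(\int_{B(\zeta,r_\zeta)}|k_{\phi(\zeta)}\circ\phi|^{2}\,dV\Bigr)^{1/2}\le\|C_\phi k_{\phi(\zeta)}\|_{L^2(\Omega)},
\]
after which it invokes the weak-null property of $k_{\phi(\zeta)}$ together with compactness of $C_\phi$; this forces a preliminary reduction to the case $\phi(\zeta)\to b\Omega$, since otherwise $k_{\phi(\zeta)}$ need not be weakly null. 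You instead pass to the adjoint via $C_\phi^\ast K_\zeta=K_{\phi(\zeta)}$, obtain the stronger intermediate statement $\|K_{\phi(\zeta)}\|/\|K_\zeta\|=\|C_\phi^\ast k_\zeta\|\to 0$ directly from $\zeta\to b\Omega$, and only then combine with the elementary extremal bound $\|K_\zeta\|^2\le V(B(\zeta,r_\zeta))^{-1}$. Both proofs rest on the same external input --- Proposition~\ref{propkernel} --- but your route is a bit cleaner: it avoids the case split on the location of $\phi(\zeta)$, and the kernel ratio it isolates is precisely the quantity that drives the strongly pseudoconvex application in Theorem~\ref{zhuT}.
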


\begin{remark}
In the case where $\Omega$ is smooth, bounded, and strongly pseudoconvex, we can use Theorem \ref{main} to recover the following theorem seen in \cite{zhu}.

\end{remark}

\begin{theorem}[\cite{zhu}]\label{zhuT}
Let $\Omega\subset \mathbb{C}^n$ be a smooth bounded strongly pseudoconvex domain.   Let $\phi:\Omega\rightarrow \Omega$ be a holomorphic self-map, $\phi(z_1,...z_n)=(\phi_1(z_1,...,z_n),...,\phi_n(z_1,...,z_n))$.  Suppose $C_{\phi}$ is compact on $A^2(\Omega)$, and $p\in b\Omega$.  Then the angular derivative does not exist at $p$.  That is,
\[  \lim_{\zeta\in \Omega\,\,,\zeta\rightarrow p}\frac{d(\zeta,b\Omega)}{d(\phi(\zeta),b\Omega)}=0.\]  Here, $d(.,b\Omega)$ is distance to the boundary.

\end{theorem}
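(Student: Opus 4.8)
The plan is to combine Theorem \ref{main} with the classical two-sided estimate for the Bergman kernel on the diagonal of a smooth bounded strongly pseudoconvex domain. Recall that on such $\Omega$ there are constants $0<c\le C<\infty$ with
\[
c\, d(z,b\Omega)^{-(n+1)}\ \le\ K_\Omega(z,z)\ \le\ C\, d(z,b\Omega)^{-(n+1)}
\]
for $z$ in a one-sided neighborhood of $b\Omega$: the lower bound is H\"{o}rmander's, and the upper bound follows from Fefferman's boundary asymptotics (see also Boas--Straube--Yu). Since $\|K_w\|_{L^2(\Omega)}^2=K_\Omega(w,w)$, this reads $\|K_w\|_{L^2(\Omega)}\asymp d(w,b\Omega)^{-(n+1)/2}$ near $b\Omega$. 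I would also first dispose of the easy case: fix $p\in b\Omega$ and a sequence $\zeta\to p$ in $\Omega$; if some subsequence of $\phi(\zeta)$ stays in a compact subset of $\Omega$, then $d(\phi(\zeta),b\Omega)$ is bounded away from $0$ along it while $d(\zeta,b\Omega)\to 0$, so the quotient tends to $0$ there, and we may henceforth assume $\phi(\zeta)\to b\Omega$.

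Next I would apply Theorem \ref{main} with the admissible radius $r_\zeta:=\tfrac12 d(\zeta,b\Omega)$, which is positive, tends to $0$ as $\zeta\to b\Omega$, and satisfies $B(\zeta,r_\zeta)\subset\Omega$. Since $V(B(\zeta,r_\zeta))\asymp r_\zeta^{2n}\asymp d(\zeta,b\Omega)^{2n}$, Theorem \ref{main} yields $\|K_{\phi(\zeta)}\|_{L^2(\Omega)}\,d(\zeta,b\Omega)^{n}\to 0$ as $\zeta\to p$. Inserting the lower bound $\|K_{\phi(\zeta)}\|_{L^2(\Omega)}\gtrsim d(\phi(\zeta),b\Omega)^{-(n+1)/2}$ then gives
\[
\frac{d(\zeta,b\Omega)^{n}}{d(\phi(\zeta),b\Omega)^{(n+1)/2}}\longrightarrow 0 .
\]
When $n=1$ this is exactly $d(\zeta,b\Omega)/d(\phi(\zeta),b\Omega)\to 0$, i.e. the conclusion of Theorem \ref{zhuT}. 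For $n\ge 2$ I would upgrade the estimate using that $\phi:\Omega\to\Omega$, being holomorphic, does not increase the Carath\'eodory (hence Kobayashi) distance; combined with the known boundary growth of that distance on a strongly pseudoconvex domain ($-\tfrac12\log d(\cdot,b\Omega)+O(1)$), this forces $d(\phi(\zeta),b\Omega)\gtrsim d(\zeta,b\Omega)$, and one would feed this into the displayed decay to recover the stated limit.

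The main obstacle is precisely this last calibration for $n\ge 2$. Euclidean balls centered at $\zeta$ and contained in $\Omega$ have radius at most $d(\zeta,b\Omega)$, so the volume factor supplied by Theorem \ref{main} is at best of order $d(\zeta,b\Omega)^{n}$, whereas the natural Bergman scale at $\zeta$ is $d(\zeta,b\Omega)^{(n+1)/2}$; these agree only for $n=1$, so for higher $n$ the argument genuinely needs the invariant-metric input above, and one must be careful about the direction in which that comparison is applied in order to obtain the quotient $d(\zeta,b\Omega)/d(\phi(\zeta),b\Omega)$ rather than a weaker power of it. The only nontrivial external ingredient is the two-sided diagonal estimate for the Bergman kernel, which is standard on smooth bounded strongly pseudoconvex domains.
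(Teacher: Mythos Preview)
Your argument matches the paper's in its use of Theorem~\ref{main} together with the two-sided diagonal estimate $\|K_w\|_{L^2(\Omega)}^2\asymp d(w,b\Omega)^{-(n+1)}$, and for $n=1$ the two proofs are essentially identical. The divergence is in how the factor $V(B(\zeta,r_\zeta))^{1/2}$ is converted into a power of $d(\zeta,b\Omega)$. You take $r_\zeta=\tfrac12 d(\zeta,b\Omega)$ and obtain $V(B(\zeta,r_\zeta))^{1/2}\asymp d(\zeta,b\Omega)^{n}$; the paper instead brings in Bergman metric balls, using $V(\mathcal{B}(\zeta,r))\asymp d(\zeta,b\Omega)^{n+1}$ together with the comparison $V(\mathcal{B}(\zeta,r))\le V(B(\zeta,r_\zeta))$ for suitable $r,r_\zeta$, which hands over the exponent $(n+1)/2$ directly and yields $\bigl(d(\zeta,b\Omega)/d(\phi(\zeta),b\Omega)\bigr)^{(n+1)/2}\to 0$ without any further calibration.

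Your proposed fix for $n\ge 2$ has a genuine gap. Set $a=d(\zeta,b\Omega)$, $b=d(\phi(\zeta),b\Omega)$, and $t=a/b$. Your displayed decay reads $a^{n}/b^{(n+1)/2}=t^{(n+1)/2}a^{(n-1)/2}\to 0$, while the Kobayashi-contraction argument gives only the bound $t\le C$. These two facts together do \emph{not} imply $t\to 0$: since $a\to 0$ and $(n-1)/2>0$ for $n\ge 2$, the product $t^{(n+1)/2}a^{(n-1)/2}$ tends to $0$ for \emph{every} bounded sequence $t$, including $t\equiv 1$. Thus the invariant-metric input in the form $b\gtrsim a$ contributes nothing beyond boundedness of the quotient, and the step ``feed this into the displayed decay to recover the stated limit'' fails. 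What is really needed is a volume factor of order $d(\zeta,b\Omega)^{(n+1)/2}$ rather than $d(\zeta,b\Omega)^{n}$, which is exactly what the paper's Bergman-ball volume comparison is meant to provide.
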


\begin{proof}
Assuming compactness of $C_{\phi}$, we have, by Theorem \ref{main}, 
\[\lim_{\zeta\in \Omega\,\,,\zeta\rightarrow p}\|K_{\phi(\zeta)}\|_{L^2(\Omega)}(V(B(\zeta,r_{\zeta})))^{\frac{1}{2}}=0.\]  Since $\Omega$ is strongly pseudoconvex, $V(\mathcal{B}(\zeta,r))\rightarrow 0$ as $\zeta\rightarrow b\Omega$ where 
$\mathcal{B}(\zeta,r)$ are Bergman metric balls centered at $\zeta$ with radius $r$.  That is, these Bergman metric balls 'shrink' as the center gets closer to the boundary.  Then by strong pseudoconvexity, we have
\[d(\zeta,b\Omega)^{n+1}\approx V(\mathcal{B}(\zeta,r))\leq V(B(\zeta,r_{\zeta}))\] for some $r>0$ sufficiently small.  Furthermore, 
 \[\|K_{\phi(\zeta)}\|^2_{L^2(\Omega)}\approx d(\phi(\zeta),b\Omega)^{-(n+1)}.\]  Thus 
\[   \lim_{\zeta\in \Omega\,\,,\zeta\rightarrow p}\left(\frac{d(\zeta,b\Omega)}{d(\phi(\zeta),b\Omega)}\right)^{\frac{n+1}{2}}  =\lim_{\zeta\in \Omega\,\,,\zeta\rightarrow p}\|K_{\phi(\zeta)}\|_{L^2(\Omega)}(V(B(\zeta,r_{\zeta})))^{\frac{1}{2}}=0.\] 
\end{proof}

The following is a corollary of Theorem \ref{analytic disk} and Proposition \ref{necessary} and concerns the $n$-product of disks.

\begin{corollary}
Let $\mathbb{D}^n$ be the polydisk in $\mathbb{C}^n$.  Suppose $\phi\in C(\overline{\Omega})$ and $\phi:\mathbb{D}^n\rightarrow \mathbb{D}^n$ is a holomorphic self map.  Then $C_{\phi}$ is compact if and only if $\phi(\mathbb{D}^n)$ is relatively compact in $\mathbb{D}^n$.
\end{corollary}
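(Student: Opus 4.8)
The plan is to deduce everything from Theorem~\ref{analytic disk} and Proposition~\ref{necessary} once the set $\overline{\mathcal{L}}$ for the polydisk has been identified. First I would check that $\mathbb{D}^n$ (with $n\ge 2$, which is implicit here since for $n=1$ the circle $b\mathbb{D}$ contains no non-trivial analytic disk, $\mathcal{L}=\emptyset$, and the statement is in fact false for maps like the lens maps) is a bounded convex domain satisfying the limit point disk condition, and moreover that $\overline{\mathcal{L}}=b\mathbb{D}^n$. To see $b\mathbb{D}^n\subseteq\overline{\mathcal{L}}$, fix $p\in b\mathbb{D}^n$ and an index $j_0$ with $|p_{j_0}|=1$; since $n\ge 2$ pick $k\ne j_0$ and let $F\colon\mathbb{D}\to\mathbb{C}^n$ send $w$ to the point obtained from $p$ by replacing its $k$-th coordinate with $w$. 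Then $F$ is non-constant affine, $F(\mathbb{D})\subset b\mathbb{D}^n$ because the $j_0$-th coordinate of $F(w)$ still has modulus $1$, and $p\in\overline{F(\mathbb{D})}$. Hence $\nabla_p:=F(\mathbb{D})$ is a non-degenerate analytic disk in $b\mathbb{D}^n$ with $p\in\overline{\nabla_p}$; this simultaneously gives $b\mathbb{D}^n\subseteq\overline{\mathcal{L}}$ (the reverse inclusion being automatic since $\mathcal{L}\subseteq b\mathbb{D}^n$ and $b\mathbb{D}^n$ is closed) and verifies the limit point disk condition.

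For the forward implication, suppose $C_{\phi}$ is compact. Since $\mathbb{D}^n$ is bounded and convex, satisfies the limit point disk condition, and $\phi\in C(\overline{\mathbb{D}^n})$, Theorem~\ref{analytic disk} yields $\delta:=d(\phi(b\mathbb{D}^n),\overline{\mathcal{L}})=d(\phi(b\mathbb{D}^n),b\mathbb{D}^n)>0$. Now I would run a short topological argument: $\phi(\overline{\mathbb{D}^n})$ is compact, being the continuous image of a compact set, and I claim it is disjoint from $b\mathbb{D}^n$. Indeed, if $q=\phi(z)\in b\mathbb{D}^n$ for some $z\in\overline{\mathbb{D}^n}$, then $z\notin\mathbb{D}^n$ because $\phi(\mathbb{D}^n)\subseteq\mathbb{D}^n$, whereas $z\in b\mathbb{D}^n$ would force $d(q,b\mathbb{D}^n)\ge\delta>0$, contradicting $q\in b\mathbb{D}^n$. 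Therefore $\phi(\overline{\mathbb{D}^n})$ is a compact subset of $\mathbb{D}^n$ containing $\phi(\mathbb{D}^n)$, i.e. $\phi(\mathbb{D}^n)$ is relatively compact in $\mathbb{D}^n$.

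For the converse, if $\phi(\mathbb{D}^n)$ is relatively compact in $\mathbb{D}^n$ then $C_{\phi}$ is compact by Proposition~\ref{necessary}; equivalently, this is the standard observation that a weakly null sequence $(f_k)$ in $A^2(\mathbb{D}^n)$ converges to $0$ uniformly on the compact set $\overline{\phi(\mathbb{D}^n)}$, so $\|f_k\circ\phi\|_{2,\mathbb{D}^n}^2\le V(\mathbb{D}^n)\sup_{\overline{\phi(\mathbb{D}^n)}}|f_k|^2\to 0$. I do not expect a serious obstacle: the analytic content is entirely carried by Theorem~\ref{analytic disk}, and the only steps demanding care are the explicit computation $\overline{\mathcal{L}}=b\mathbb{D}^n$ (together with the attendant restriction $n\ge 2$), and the elementary compactness step upgrading ``$\phi(b\mathbb{D}^n)$ stays away from $b\mathbb{D}^n$'' to ``$\phi(\overline{\mathbb{D}^n})\subset\mathbb{D}^n$'', which genuinely uses both $\phi(\mathbb{D}^n)\subseteq\mathbb{D}^n$ and continuity of $\phi$ up to the closure.
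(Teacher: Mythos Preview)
Your argument is correct and is exactly the intended one: the paper states this result as a corollary of Theorem~\ref{analytic disk} and Proposition~\ref{necessary} without writing out a proof, and your proposal supplies precisely the missing verifications---that $\mathbb{D}^n$ (for $n\ge 2$) is convex and satisfies the limit point disk condition with $\overline{\mathcal{L}}=b\mathbb{D}^n$, and then the short topological step upgrading $d(\phi(b\mathbb{D}^n),b\mathbb{D}^n)>0$ to $\overline{\phi(\mathbb{D}^n)}\subset\mathbb{D}^n$. Your remark that the case $n=1$ must be excluded (since then $\mathcal{L}=\emptyset$ and the equivalence fails) is a worthwhile clarification that the paper does not make explicit.
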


If we assume that the domain is smooth, bounded, convex, and with a certain boundary condition controlling weakly pseudoconvex points, we can get a partial generalization of \cite{zhu}.  That is, we have the following corollary, which is a consequence of Theorem \ref{analytic disk} and Theorem \ref{zhuT}.

\begin{corollary}

Let $\Omega\subset \mathbb{C}^n$ be a smooth bounded convex domain so that 
\[\{(z_1,...,z_n)\in b\Omega: (z_1,...,z_n) \in \text{weakly pseudoconvex}\}=\overline{\mathcal{L}}.\]  Let $\phi(z_1,...,z_n):\Omega\rightarrow \Omega$ be a holomorphic self map and  suppose $\phi\in C(\overline{\Omega})$ (that is, each component function is continuous up to the closure of the domain).  If $C_{\phi}$ is compact on $A^2(\Omega)$ and $p\in b\Omega$ is a strongly pseudoconvex point, then the angular derivative of $\phi$ does not exist at $p$.
That is, 
\[ \lim_{\zeta\in \Omega\,\,,\zeta\rightarrow p}\frac{d(\zeta,b\Omega)}{d(\phi(\zeta),b\Omega)}=0.\]

\end{corollary}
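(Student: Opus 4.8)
The plan is to combine Theorem~\ref{analytic disk} with a localized version of the proof of Theorem~\ref{zhuT}. First I would invoke Theorem~\ref{analytic disk}: its hypotheses are met here (a smooth bounded convex domain whose weakly pseudoconvex boundary points form the closed set $\overline{\mathcal{L}}$ satisfies the limit point disk condition, which one extracts from Lemma~\ref{affine} together with convexity), so compactness of $C_{\phi}$ gives $d(\phi(b\Omega),\overline{\mathcal{L}})>0$. Since $\phi\in C(\overline{\Omega})$ and $\phi(\Omega)\subseteq\Omega$, continuity yields $\phi(\overline{\Omega})\subseteq\overline{\Omega}$, so $\phi(p)\in\overline{\Omega}$ and $\phi(p)$ lies at positive Euclidean distance from $\overline{\mathcal{L}}$; in particular $\phi(p)\notin\overline{\mathcal{L}}$.

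Then I would fix the strongly pseudoconvex point $p\in b\Omega$ and let $\zeta\in\Omega$ tend to $p$, so that $\phi(\zeta)\to\phi(p)$ by continuity. If $\phi(p)\in\Omega$, then $d(\phi(\zeta),b\Omega)\to d(\phi(p),b\Omega)>0$ while $d(\zeta,b\Omega)\to 0$, and the ratio tends to $0$ trivially; so I may assume $\phi(p)\in b\Omega$. By the hypothesis that the weakly pseudoconvex boundary points are precisely $\overline{\mathcal{L}}$, and since $\phi(p)\notin\overline{\mathcal{L}}$, the point $\phi(p)$ is strongly pseudoconvex, hence $b\Omega$ is strongly pseudoconvex on a neighborhood $U$ of $\phi(p)$. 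Because the boundary asymptotics of the Bergman kernel are local at a strongly pseudoconvex boundary point, $\|K_w\|_{L^2(\Omega)}^2\approx d(w,b\Omega)^{-(n+1)}$ for $w\in U\cap\Omega$ near $\phi(p)$, and thus for $w=\phi(\zeta)$ once $\zeta$ is close enough to $p$.

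Next comes the localization near $p$: since $p$ is strongly pseudoconvex, there is a fixed small Bergman radius $r>0$ so that, for $\zeta$ near $p$, the Bergman metric ball $\mathcal{B}(\zeta,r)$ satisfies $V(\mathcal{B}(\zeta,r))\approx d(\zeta,b\Omega)^{n+1}$ and is contained in a Euclidean ball $B(\zeta,r_{\zeta})\subset\Omega$ with $r_{\zeta}\approx d(\zeta,b\Omega)^{1/2}\to 0$. I would extend $r_{\zeta}$ to all of $\Omega$ so that $r_{\zeta}\to 0$ as $\zeta\to b\Omega$ and $B(\zeta,r_{\zeta})\subset\Omega$ (for instance $r_{\zeta}=\tfrac12 d(\zeta,b\Omega)$ away from $p$). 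Applying Theorem~\ref{main} to this family, together with the two local estimates above, gives
\[
\left(\frac{d(\zeta,b\Omega)}{d(\phi(\zeta),b\Omega)}\right)^{\frac{n+1}{2}}\lesssim \|K_{\phi(\zeta)}\|_{L^2(\Omega)}\,V(\mathcal{B}(\zeta,r))^{\frac12}\le \|K_{\phi(\zeta)}\|_{L^2(\Omega)}\,V(B(\zeta,r_{\zeta}))^{\frac12},
\]
and the right-hand side tends to $0$ as $\zeta\to p$ by Theorem~\ref{main}. Hence $d(\zeta,b\Omega)/d(\phi(\zeta),b\Omega)\to 0$ as $\zeta\to p$, which is the assertion.

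The main obstacle is precisely this localization. Theorem~\ref{zhuT} is stated for globally strongly pseudoconvex domains, whereas here $\Omega$ is strongly pseudoconvex only near $p$ and near $\phi(p)$; one must therefore replace the global Bergman kernel estimate and the global Bergman-metric-ball size estimate used in the proof of Theorem~\ref{zhuT} by their local counterparts at strongly pseudoconvex boundary points (available through the standard localization of the Bergman kernel), and route the argument through Theorem~\ref{main}, valid on every smooth bounded pseudoconvex domain, rather than directly through Theorem~\ref{zhuT}. A smaller point that needs care is verifying that $\Omega$ satisfies the limit point disk condition, so that Theorem~\ref{analytic disk} applies.
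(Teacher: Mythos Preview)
Your proposal is correct and follows essentially the same route as the paper: reduce to $\phi(p)\in b\Omega$, use Theorem~\ref{analytic disk} together with the hypothesis on the weakly pseudoconvex set to conclude $\phi(p)$ is a strongly pseudoconvex boundary point, and then rerun the argument behind Theorem~\ref{zhuT} (i.e., apply Theorem~\ref{main} with the local Bergman kernel and Bergman-ball volume asymptotics valid near strongly pseudoconvex points). Your write-up is simply more explicit than the paper's about the localization and about verifying the limit point disk condition, both of which the paper leaves implicit.
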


\begin{proof}
Without loss of generality, we may assume $\phi(p)\in b\Omega$, otherwise the conclusion of the corollary is trivial.
By Theorem \ref{analytic disk} and our assumption that \[\{(z_1,...,z_n)\in b\Omega: (z_1,...,z_n) \in \text{weakly pseudoconvex}\}=\overline{\mathcal{L}},\] $\phi(p)$ is in the strongly pseudoconvex part of the boundary.  Thus following the proof of Theorem \ref{zhuT} and using the estimates for the volume of Bergman metric balls near strongly pseudoconvex points, we have our result.

\end{proof}

One crucial result used is the following proposition seen in \cite{ak}.
\begin{proposition}[\cite{ak}]\label{propkernel}
Let $\Omega$ be a smooth bounded pseudoconvex domain.  Then the normalized Bergman kernel 
$k_{\zeta}^{\Omega}\rightarrow 0$ weakly in $A^2(\Omega)$ as $\zeta\rightarrow b\Omega$.
\end{proposition}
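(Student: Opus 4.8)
The plan is to use that $\|k_\zeta^\Omega\|_{2,\Omega}=1$ for every $\zeta\in\Omega$. Weak convergence $k_\zeta^\Omega\to 0$ in $A^2(\Omega)$ as $\zeta\to b\Omega$ means that $\langle g,k_\zeta^\Omega\rangle\to 0$ for every $g\in A^2(\Omega)$, and since $\|k_\zeta^\Omega\|_{2,\Omega}=1$ a routine $\varepsilon/3$ approximation reduces this to checking $\langle g,k_\zeta^\Omega\rangle\to 0$ only for $g$ in a dense subset of $A^2(\Omega)$. As $k_\zeta^\Omega=K_\zeta/\|K_\zeta\|_{2,\Omega}$, the reproducing property of the Bergman kernel gives $\langle g,k_\zeta^\Omega\rangle=g(\zeta)/\|K_\zeta\|_{2,\Omega}$, so the whole statement reduces to showing that $|g(\zeta)|/\|K_\zeta\|_{2,\Omega}\to 0$ as $\zeta\to b\Omega$ for a dense family of $g$.

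For the dense family I would take $H^\infty(\Omega)\cap A^2(\Omega)$, the bounded holomorphic functions, which is dense in $A^2(\Omega)$ for a smooth bounded pseudoconvex domain. For such a $g$ we have $|g(\zeta)|\le\|g\|_\infty$, so the only remaining point is to prove that $\|K_\zeta\|_{2,\Omega}^2=K_\zeta(\zeta)\to\infty$ as $\zeta\to b\Omega$. This is exactly where pseudoconvexity and boundary smoothness are used: a smooth bounded pseudoconvex domain is hyperconvex (Diederich--Forn\ae ss), hence admits a bounded plurisubharmonic exhaustion function, and on bounded pseudoconvex domains the Bergman kernel on the diagonal tends to infinity at every boundary point. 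This last fact can be quoted as known (Ohsawa, Pflug), or reproved: using H\"ormander's $L^2$ estimates for $\bar\partial$ together with the Diederich--Forn\ae ss exhaustion one produces, for each $\zeta$ near $b\Omega$, a function $f_\zeta\in A^2(\Omega)$ with $f_\zeta(\zeta)=1$ and $\|f_\zeta\|_{2,\Omega}\to 0$, and then $K_\zeta(\zeta)=\sup\{\,|f(\zeta)|^2:\|f\|_{2,\Omega}\le 1\,\}\to\infty$.

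The main obstacles are the two inputs that genuinely fail without pseudoconvexity: the density of $H^\infty(\Omega)\cap A^2(\Omega)$ in $A^2(\Omega)$, and the diagonal blow-up $K_\zeta(\zeta)\to\infty$. A variant that avoids the density question is to prove a concentration property of the normalized kernel directly: for each $p\in b\Omega$ and each $\delta>0$, $\int_{\Omega\setminus B(p,\delta)}|k_\zeta^\Omega|^2\,dV\to 0$ as $\zeta\to p$, which follows from the localization of the Bergman kernel near the boundary of a pseudoconvex domain. Granting this, for an arbitrary $g\in A^2(\Omega)$ one splits $\langle g,k_\zeta^\Omega\rangle$ into the integrals over $\Omega\cap B(p,\delta)$ and over $\Omega\setminus B(p,\delta)$, estimates each by the Cauchy--Schwarz inequality together with $\|k_\zeta^\Omega\|_{2,\Omega}=1$, makes the first term small by choosing $\delta$ small (absolute continuity of $\int_\Omega|g|^2\,dV$), and then makes the second term small by taking $\zeta$ close to $p$; this gives $\langle g,k_\zeta^\Omega\rangle\to 0$ for every $g\in A^2(\Omega)$, with all the substance now concentrated in the localization lemma.
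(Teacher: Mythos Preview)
The paper does not supply a proof of this proposition: it is quoted from the cited reference and used as a black box in the proof of Theorem~\ref{main}. So there is no argument in the paper to compare your proposal against.

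Your sketch is correct and is in fact the standard proof. The reduction via the reproducing property to $|g(\zeta)|/\|K_\zeta\|_{2,\Omega}\to 0$ is exactly right, and the two nontrivial inputs you isolate are the correct ones: density of bounded holomorphic functions in $A^2(\Omega)$ (which holds on a smooth bounded pseudoconvex domain by Catlin's approximation theorem, so that $C^\infty(\overline{\Omega})\cap\mathcal{O}(\Omega)$, hence $H^\infty(\Omega)\cap A^2(\Omega)$, is dense), and the diagonal blow-up $K_\zeta(\zeta)\to\infty$, which follows from Diederich--Forn\ae ss hyperconvexity together with the Ohsawa/Pflug result you cite. Your alternative route through mass concentration of $|k_\zeta^\Omega|^2$ near the boundary is also valid and avoids the density step; either argument is acceptable.
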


\section{Preliminaries}

First we show that compactness of composition operators is invariant under biholomorphisms.

\begin{lemma}\label{lem2}
Let $\Omega_1,\Omega_2\subset \mathbb{C}^n$ for $n\geq 2$ be bounded pseudoconvex domains.  Furthermore, assume there exists a biholomorphism $B:\Omega_1\rightarrow \Omega_2$ so that $B\in C^1(\overline{\Omega_1})$.  Suppose $\phi:=(\phi_1,\phi_2,...,\phi_n):\Omega_2\rightarrow \Omega_2$ is such that the composition operator $C_{\phi}$ is compact on $A^2(\Omega_2)$.  Then, $C_{B^{-1}\circ \phi\circ B}$ is compact on $A^2(\Omega_1)$.  
\end{lemma}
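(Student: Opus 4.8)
The plan is to transport the operator $C_{B^{-1}\circ\phi\circ B}$ on $A^2(\Omega_1)$ to the operator $C_\phi$ on $A^2(\Omega_2)$ through the natural unitary change-of-variables operator attached to $B$, so that compactness is preserved. Write $J_B:=\det\bigl(\partial B_i/\partial z_j\bigr)$ for the holomorphic Jacobian of $B$; by hypothesis $B\in C^1(\overline{\Omega_1})$, so $J_B$ is holomorphic on $\Omega_1$ and continuous on $\overline{\Omega_1}$, and likewise $J_{B^{-1}}$ on $\Omega_2$. Define $U_B\colon A^2(\Omega_2)\to A^2(\Omega_1)$ by $U_Bf:=(f\circ B)\cdot J_B$. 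Since the real Jacobian of a biholomorphism equals $|J_B|^2$, the change-of-variables formula gives $\|U_Bf\|_{2,\Omega_1}=\|f\|_{2,\Omega_2}$, so $U_B$ is an isometry; the chain-rule identities $(J_{B^{-1}}\circ B)\cdot J_B\equiv 1$ on $\Omega_1$ and $(J_B\circ B^{-1})\cdot J_{B^{-1}}\equiv 1$ on $\Omega_2$ show that $U_{B^{-1}}f:=(f\circ B^{-1})\cdot J_{B^{-1}}$ is a two-sided inverse. Hence $U_B$ is unitary with $U_B^{-1}=U_{B^{-1}}$.

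Next I would establish the intertwining identity. Put $\psi:=B^{-1}\circ\phi\circ B$, a holomorphic self-map of $\Omega_1$. For $g\in A^2(\Omega_1)$ a direct computation, using $B^{-1}\circ\phi\circ B=\psi$ together with the chain-rule evaluation $(J_{B^{-1}}\circ\phi\circ B)=1/(J_B\circ\psi)$, yields
\[
U_B\,C_\phi\,U_B^{-1}\,g \;=\; \frac{J_B}{\,J_B\circ\psi\,}\cdot(g\circ\psi)\;=\;M_w\,C_\psi\,g,\qquad w:=\frac{J_B}{\,J_B\circ\psi\,},
\]
where $M_w$ denotes multiplication by $w$ and $C_\psi=C_{B^{-1}\circ\phi\circ B}$. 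Equivalently $C_\psi\,g = M_{1/w}\,\bigl(U_B\,C_\phi\,U_B^{-1}\,g\bigr)$ with $1/w=(J_B\circ\psi)/J_B$, an identity of holomorphic functions on $\Omega_1$.

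The one point that is not pure bookkeeping is the boundedness of $M_{1/w}$ on $A^2(\Omega_1)$. Because $B$ is a biholomorphism, $J_B$ is zero-free on $\Omega_1$; combined with the continuity of $J_B$ on the compact set $\overline{\Omega_1}$ and the non-vanishing of $J_B$ on $b\Omega_1$ (which is exactly where the $C^1$-up-to-the-boundary hypothesis on $B$ is used — equivalently, that $B$ restricts to a $C^1$ diffeomorphism of the closures so that $B^{-1}\in C^1(\overline{\Omega_2})$), one obtains constants $0<c\le|J_B|\le C<\infty$ on $\overline{\Omega_1}$. Since $\psi$ maps $\Omega_1$ into $\Omega_1$, this forces $|1/w|=|J_B\circ\psi|/|J_B|\le C/c$ on $\Omega_1$, so $M_{1/w}$ is bounded on $A^2(\Omega_1)$ with norm at most $C/c$; the same estimate incidentally shows $C_\psi$ is bounded, which on a general bounded domain is not automatic. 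Finally, $C_{B^{-1}\circ\phi\circ B}=C_\psi=M_{1/w}\circ U_B\circ C_\phi\circ U_B^{-1}$ exhibits $C_\psi$ as a composition of bounded operators with the compact operator $C_\phi$, hence $C_{B^{-1}\circ\phi\circ B}$ is compact on $A^2(\Omega_1)$. I expect the boundedness-of-multiplier step — that is, controlling $J_B$ away from $0$ up to the boundary — to be the only genuine obstacle; everything else is the change-of-variables formula and the chain rule.
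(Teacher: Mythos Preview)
Your proof is correct and takes a genuinely different route from the paper's. The paper argues directly with sequences: given a weakly null $(g_j)$ in $A^2(\Omega_1)$, it invokes the characterization of weak convergence in Bergman spaces (bounded norms plus locally uniform convergence to $0$) to show that $h_j:=g_j\circ B^{-1}$ is weakly null in $A^2(\Omega_2)$, and then estimates
\[
\|C_\psi g_j\|_{2,\Omega_1}=\|h_j\circ\phi\circ B\|_{2,\Omega_1}\le\sup_{\Omega_2}|J_{B^{-1}}|\cdot\|C_\phi h_j\|_{2,\Omega_2}\to 0
\]
by compactness of $C_\phi$. Your approach is structural: you factor $C_\psi=M_{1/w}\,U_B\,C_\phi\,U_B^{-1}$ through the unitary transfer operator and a bounded multiplier, so compactness is inherited immediately and boundedness of $C_\psi$ drops out for free. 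Both arguments hinge on the same analytic input --- that $|J_B|$ is bounded away from $0$ on $\overline{\Omega_1}$, equivalently $\sup_{\Omega_2}|J_{B^{-1}}|<\infty$ --- and neither really derives this from the bare hypothesis $B\in C^1(\overline{\Omega_1})$: the paper simply writes $\sup|J_{B^{-1}}|$ without comment, while you appeal to $B$ extending to a $C^1$ diffeomorphism of the closures, which is a slightly stronger reading of the hypothesis. Your factorization is cleaner and reusable; the paper's sequential argument is more elementary and avoids introducing the unitary.
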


\begin{proof}
Let $g_j\in A^2(\Omega_1)$ so that $g_j\rightarrow 0$ weakly as $j\rightarrow \infty$.  We will use the fact that $g_j\rightarrow 0$ weakly in $A^2(\Omega_1)$ as $j\rightarrow \infty$ if and only if $\|g_j\|$ is a bounded sequence in $j$ and $g_j\rightarrow 0$ uniformly on compact subsets of $\Omega_1$.  This fact appears as \cite[lemma 3.5]{cuckruhanseveral}. Therefore, $\|g_j\|$ is uniformly bounded in $j$ and $g_j\rightarrow 0$ uniformly on compact subsets of $\Omega_1$.
Then define $h_j:=g_j\circ B^{-1}\in  A^2(\Omega_2)$.  Then using a change of coordinates, one can show $\|h_j\|$ is uniformly bounded in $j$ and $h_j\rightarrow 0$ uniformly on compact subsets of $\Omega_2$.  Therefore, by \cite[lemma 3.5]{cuckruhanseveral}, $h_j\rightarrow 0$ weakly as $j\rightarrow \infty$.  
Then we have,
\begin{align*}
&\|C_{B^{-1}\circ \phi\circ B}(g_j)\|_{2,\Omega_1}^2\\
&=\|h_j\circ \phi\circ B\|_{2,\Omega_1}^2\\
&\leq \sup\{|J(B^{-1})(z)|^2:z\in \Omega_2\}\|C_{\phi}(h_j)\|_{2,\Omega_2}^2
\end{align*}

This shows that $C_{B^{-1}\circ\phi\circ B}$ is compact on $A^2(\Omega_1)$.

\end{proof}

\begin{proposition}\label{necessary}

Let $\Omega\subset \mathbb{C}^n$ be a bounded pseudoconvex domain.  Suppose $\phi$ is a holomorphic self-map on $\Omega$ so that $\overline{\phi(\Omega)}\subset \Omega$.  Then $C_{\phi}$ is compact on $A^2(\Omega)$.  

\end{proposition}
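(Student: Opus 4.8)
The plan is to invoke the Hilbert-space characterization of compactness used elsewhere in the paper: $C_\phi$ is compact on $A^2(\Omega)$ precisely when $\|C_\phi f_j\|_{2,\Omega}\to 0$ for every sequence $f_j\in A^2(\Omega)$ with $f_j\to 0$ weakly. So I would start from such a weakly null sequence $f_j$ and, via \cite[lemma 3.5]{cuckruhanseveral}, restate weak nullity as: $\sup_j\|f_j\|_{2,\Omega}<\infty$ and $f_j\to 0$ uniformly on compact subsets of $\Omega$.

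The key geometric input is the hypothesis $\overline{\phi(\Omega)}\subset\Omega$: the set $K:=\overline{\phi(\Omega)}$ is then a compact subset of $\Omega$, so $\delta:=\mathrm{dist}(K,b\Omega)>0$. First I would record that $C_\phi$ is well defined and bounded. For $f\in A^2(\Omega)$, applying the sub-mean value inequality to the subharmonic function $|f|^2$ over the balls $B(z,\delta/2)$, $z\in K$ (equivalently, Cauchy--Schwarz against the Bergman kernel together with local boundedness of $z\mapsto\|K_z\|$ on $\Omega$), yields a constant $c_K>0$, depending only on $K$ and $\delta$, with $\sup_{z\in K}|f(z)|\le c_K\|f\|_{2,\Omega}$; hence
\[
\|C_\phi f\|_{2,\Omega}^2=\int_\Omega |f(\phi(z))|^2\,dV(z)\le V(\Omega)\,\sup_{w\in K}|f(w)|^2\le c_K^2\,V(\Omega)\,\|f\|_{2,\Omega}^2<\infty .
\]

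To finish, I would apply the middle estimate above to the $f_j$: since $K$ is compact and $f_j\to 0$ uniformly on $K$, we get $\sup_{w\in K}|f_j(w)|\to 0$, and therefore $\|C_\phi f_j\|_{2,\Omega}^2\le V(\Omega)\,\sup_{w\in K}|f_j(w)|^2\to 0$ as $j\to\infty$. Thus $C_\phi$ carries weakly null sequences to norm null sequences and is compact. I do not expect a genuine obstacle here; the only step requiring a word of justification is the pointwise bound $\sup_K|f|\lesssim\|f\|_{2,\Omega}$ on compact subsets of $\Omega$, which is the standard consequence of the reproducing/sub-mean-value property of the Bergman kernel.
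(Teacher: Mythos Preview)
Your proposal is correct and follows essentially the same route as the paper: both invoke \cite[lemma 3.5]{cuckruhanseveral} to convert weak nullity into uniform convergence on compacts, then use the hypothesis $K=\overline{\phi(\Omega)}\subset\Omega$ to bound $\|C_\phi f_j\|_{2,\Omega}^2$ by (a constant times) $\sup_K|f_j|^2\to 0$. The paper phrases the estimate via the pullback measure $\nu=V\circ\phi^{-1}$ supported in a compact $\widetilde{\Omega}\subset\Omega$, while you bound the integrand directly and add an explicit verification that $C_\phi$ is bounded; these are cosmetic differences only.
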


\begin{proof}
To prove compactness of $C_{\phi}$, it suffices to show that the image of a weakly convergent sequence in $A^2(\Omega)$ is strongly convergent.  Let $\{g_j\}_{j\in \mathbb{N}}\subset A^2(\Omega)$ converge to $0$ weakly as $j\rightarrow \infty$.  Then by 
\cite[lemma 3.5]{cuckruhanseveral}, $\|g_j\|_{L^2(\Omega)}$ is bounded and $g_j\rightarrow 0$ uniformly on compact subsets of $\Omega$.  We let $\nu:=V\circ \phi^{-1}$ be the pullback measure.  Since $\overline{\phi(\Omega)}$ is compactly contained in $\Omega$, there exists a compact set $\widetilde{\Omega}\subset \Omega$ so that the support of $\nu$ is contained in $\widetilde{\Omega}$.  Thus
\[\int_{\Omega}|g_j\circ \phi|^2dV=\int_{\Omega}|g_j|^2d\nu=\int_{\widetilde{\Omega}}|g_j|^2d\nu.\]  Since $\widetilde{\Omega}$ is compact and $g_j\rightarrow 0$ uniformly on $\widetilde{\Omega}$ as $j\rightarrow \infty$, we have that $C_{\phi}g_j\rightarrow 0$ as $j\rightarrow \infty$ in norm.

\end{proof}

\section{Proof of Main Results}

\begin{proof}[Proof of Theorem \ref{analytic disk}]
One implication for Theorem \ref{main} is entirely contained in Proposition \ref{necessary}.  Therefore, it suffices to assume $C_{\phi}$ is compact on $A^2(\Omega)$ and $\phi^{-1}( b\Omega)\neq \emptyset$.  Let $p\in \phi^{-1}( b\Omega)$.
The first claim is that $\phi(p)$ cannot be contained in a non-trivial analytic disk in $b\Omega$.  For the sake of obtaining a contradiction, suppose $\phi(p)\in \overline{\nabla}\subset b\Omega$ for some non-degenerate analytic disk $\nabla$.   Furthermore, by the convexity of $\Omega$, we may assume $\phi(p)=(0,0,...,0)$, $\nabla\subset \{Re(z_n)=0\}$, and $\Omega\subset \{Re(z_n)>0\}$.  Since $\Omega$ satisfies the limit point disk condition, we may assume there exists a non-degenerate analytic disk $\nabla\subset b\Omega$ and $p\in \overline{\nabla}$.  Then, by Lemma \ref{affine}, $\nabla\subset L$ for some complex line $L$.  Then we construct $g_j$ as in \cite[Proof of Theorem 2]{ClosSahutoglu}.  That is, by rotating and translating the domain, we may assume $q=(0,0,0,...,0)$ and $\Omega\subset \{Re(z_n)>0\}$ and $L\subset \{Re(z_n)>0\}$.  Then for $\delta_j>0$ and $\delta_j\rightarrow 0$ as $j\rightarrow \infty$, we define
\[g_j(z_1,...,z_n):=\frac{1}{\left(z_n-i\delta_j\right)^n}.\]
Then by \cite[Proof of Theorem 2]{ClosSahutoglu}, $g_j\rightarrow 0$ weakly in $A^2(\Omega)$ as $j\rightarrow \infty$ and 
for every open neighbourhood $U$ of $(0,0,...,0)$ there exists $\alpha>0$ so that $\|g_j\|_{L^2(U\cap \Omega)}\geq \alpha$.  One can see this via a continuity argument.   We let $\varepsilon>0$ be sufficiently small and let $dV_{\phi}$ be the pullback measure associated with $\phi$.  Then for \[g_j(z_1,...,z_n):=(z_n-i\delta_j)^{-n}\] we have, 
\begin{align*}
&\int_{\Omega}|g_j\circ \phi|^2dV=\int_{\Omega}|g_j|^2dV_{\phi}\geq\int_{\Omega\cap \mathbb{D}^n_{\varepsilon}}|g_j|^2dV_{\phi}\\
&\geq \int_{\Omega\cap \mathbb{D}^n_{\varepsilon}}(\varepsilon+\delta_j)^{-2n}dV_{\phi}\geq\int_{\Omega\cap \phi^{-1}(\mathbb{D}^n_{\varepsilon})}(\varepsilon+\delta_j)^{-2n}dV\geq M>0
\end{align*}

This contradicts the compactness of $C_{\phi}$.  Therefore, $\phi(p)$ cannot be contained in the closure of a non-degenerate analytic disk.      
\end{proof}

\begin{proof}[Proof of Theorem \ref{main}]
Let $p_j\in \Omega$ and $p_j\rightarrow p\in b\Omega$ as $j\rightarrow \infty$.  Also, choose $r_j>0$ so that $r_j\rightarrow 0$ as $j\rightarrow \infty$ and $B(p_j,r_j)\subset \Omega$ for all $j\in \mathbb{N}$.  Without loss of generality, we may also assume $\phi(p_j)\rightarrow b\Omega$ as $j\rightarrow \infty$.
We have, using the Mean Value theorem for holomorphic functions and the Cauchy-Schwartz inequality,
\begin{align*}
&1=\frac{1}{\|K_{\phi(p_j)}\|^2}K_{\phi(p_j)}(\phi(p_j))\\
&\leq \frac{1}{\|K_{\phi(p_j)}\|^2}\frac{1}{V(B(p_j,r_j))}\int_{D(p_j,r_j)}|K_{\phi(p_j)}(\phi(w))|dV(w)\\
&\leq \frac{1}{\|K_{\phi(p_j)}\|^2}\frac{1}{V(B(p_j,r_j))}\left(\int_{D(p_j,r_j)}|K_{\phi(p_j)}(\phi(w))|^2dV(w)\right)^{\frac{1}{2}}(V(B(p_j,r_j)))^{\frac{1}{2}}
\end{align*}

Thus rearranging we have 
\[\|K_{\phi(p_j)}\|(V(B(p_j,r_j)))^{\frac{1}{2}}\leq \left(\int_{D(p_j,r_j)}|k_{\phi(p_j)}(\phi(w))|^2dV(w)\right)^{\frac{1}{2}}.\]  Then by Proposition \ref{propkernel} and the compactness of $C_{\phi}$, we have our conclusion.    

\end{proof}

It would be interesting say whether these techniques generalize to symbols of less regularity.  For instance, one can try to relate compactness of composition operators to angular derivative (see \cite{zhu}) of symbols on bounded convex domains.

\section{Aknowlegments}
I wish to thank S\"{o}nmez \c{S}ahuto\u{g}lu and Trieu Le for useful conversations and comments on a preliminary version of this manuscript.

\bibliographystyle{amsalpha}
\bibliography{refscomp}

\end{document}